\title{On Edge Dimension of a Graph}
\author{Nina Zubrilina}
\date{\today}
\begin{document}
\maketitle

\theoremstyle{definition}
\newtheorem{theorem}{Theorem}[section]
\newtheorem{lemma}[theorem]{Lemma}
\newtheorem*{claim}{Claim}
\newtheorem*{question}{Question}
\newtheorem{defn}[theorem]{Definition}
\newtheorem{remark}[theorem]{Remark}
\newtheorem{corollary}[theorem]{Corollary}
\newcommand{\diam}{\mathrm{diam}}
\newcommand{\edim}{\mathrm{edim}}
\newcommand\abs[1]{\left|#1\right|}
\newcommand{\tab}{\hspace*{1cm}}

\newcommand{\thistheoremname}{}
\newtheorem{genericthm}[theorem]{\thistheoremname}
\newenvironment{namedprob}[1]
  {\renewcommand{\thistheoremname}{#1}%
   \begin{genericthm}}
  {\end{genericthm}}
\newtheorem*{genericthm*}{\thistheoremname}
\newenvironment{namedprob*}[1]
  {\renewcommand{\thistheoremname}{#1}%
   \begin{genericthm*}}
  {\end{genericthm*}}

\begin{abstract}
Given a connected graph $G(V, E)$, the edge dimension, denoted $\edim(G)$, is the least size of a set $S \subset V$ that distinguishes every pair of edges of $G$, in the sense that the edges have pairwise different tuples of distances to the vertices of $S$. The notation was introduced by Kelenc, Tratnik, and Yero, and in their paper they asked several questions about some properties of $\edim$. In this article we answer two of these questions: we classify the graphs on $n$ vertices for which $\edim(G) = n-1$  and show that $\frac{\edim(G)}{\dim(G)}$ isn't bounded from above (here $\dim(G)$ is the standard metric dimension of $G$). We also compute $\edim (G\Box P_m)$ and $\edim(G + K_1)$.
\end{abstract}

\section{Introduction} 
 Let $G(V, E) $ be a simple unconnected graph. We define the distance between an edge $e = xy$ and vertex $v$ as: $$d(e, v) = \min \{ d(x, v), d(y, v) \}.$$ A vertex $v$ $\textit{distinguishes}$ two edges $e_1$ and $e_2$ if $d(e_1, v) \neq d(e_2, v)$. A set $S \subseteq V$ is an $\textit{edge metric generator}$ of a graph $G(V, E)$ if for any two distinct edges $ e_1, e_2 \in E$ there is a vertex $ s \in S $ such that $s$ distinguishes $e_1$ and $e_2$. An edge generating set with the smallest number of elements is called an $\textit{edge basis}$ of $G$, and the number of elements in an edge basis is the $\emph{edge dimension}$ of $G$ (denoted $\edim(G)$). 

This concept was introduced by Kelenc, Tratnik and Yero in \cite{2016arXiv160200291K} in analogy with the classical metric dimension $\dim(G)$ defined as follows: a vertex $v \in V$ distinguishes $v_1, v_2 \in V$ if $d(v, v_1) \neq d(v, v_2)$. A set $S \subseteq V$ is a \emph{vertex generating set} of $G$ if for any distinct $ v_1, v_2 \in V$ there is a vertex $ s \in S $ such that $s$ distinguishes $v_1$ and $ v_2$. A vertex generating set with the smallest number of elements is a \emph{vertex basis} of $G$, and the number of elements in a vertex basis is its \emph{dimension} (denoted $\dim(G))$.

Metric dimension was introduced by Slater in 1975 in \cite{slater1975leaves}, in connection with the problem of uniquely recognizing
the location of an intruder in a network. The same concept was introduced independently by Harary and Melter in \cite{harary1976metric}. This graph invariant is helpful in areas such as robot navigation (\cite{khuller1996landmarks}), chemistry (\cite{chartrand2000resolvability}, \cite{chartrand2000resolvability1}, \cite{johnson1993structure}) and problems of image processing and pattern recognition involving hierarchical data structures (\cite{melter1984metric}). Metric generators in graphs are also connected to coin weighing and the Mastermind game as discussed in \cite{caceres2007metric}.

In \cite{2016arXiv160200291K}, Kelenc, Tratnic and Yero introduce $\edim$ and calculate it for various graphs, including paths, cycles, trees and grids. They give examples of graphs for which $\edim(G)< \dim(G)$ (wheel graphs), $\edim(G)= \dim(G)$ (trees) and $\edim(G) > \dim(G)$ ($C_{4r}\Box C_{4t}$ for integers $r, t$). They give examples of graphs with $\frac{\edim(G)}{\dim(G)} \approx 5/2$ and ask if the  $\frac{\edim(G)}{\dim(G)}$ ratio is bounded from above. They also ask for the classification of the graphs with $\edim(G) = \abs{V} - 1$. In this paper we answer both questions. We also calculate $\edim(G \Box P_m)$ and $\edim(G + K_1)$. 

We will use the following notation: \\
Consider some vertex $x$ of a graph. The $\emph{distance tuple}$ of $x$ on $S \subseteq V$, $S = \{ v_1,\ldots, v_k \}$ is the tuple $$d_S(x) = (d(x, v_1), d(x, v_2), \ldots, d(x, v_k)).$$ It is easy to see that $S$ is a vertex generator if and only if the distance tuples on $S$ are different for all vertices of $V(G)$.\\
We define the distance tuple identically if $x$ is an edge. Similarly, $S$ is an edge generator if and only if the distance tuples on $S$ are different for all edges of $E(G)$. \\
We use the notation $N(v)$ for vertices adjacent to $v$ (not including $v$). We use $V(G)$ and $ E(G)$ to denote the vertices and edges of a graph $G$. We say $\diam(G)= \max\{d(u, v) \vert u, v \in V(G) \}$ and denote the maximal degree of the vertices of $G$ with $\Delta(G)$. We use notation $G_1 + G_2$ for the sum of graphs $G_1$, $G_2$, which is constructed by connecting all the vertices of $G_1$ with all the vertices of $G_2$. We use $P_m$ to denote a path of length $m$. We use $G_1 \Box G_2$ to denote the Cartesian product of $G_1$ and $G_2$. All the graphs are simple, connected and undirected.

\section{Graphs for which $\bm{\edim = \abs{V}-1}$}
 For a graph $G(V, E)$ it is easy to see that if $\abs{V} = n$, then $\edim \leq n-1$ as any $n-1$ vertices form an edge generating set. We will now describe all the graphs for which $\edim = n-1$. 

\begin{defn}
We call the set $(N(v_1) \cup N(v_2))\setminus ((N(v_1) \cap N(v_2))$ the $\emph{non-mutual neighbors}$ of $v_1$ and $v_2$.
\end{defn}

\begin{theorem}\label{ncondition}
Let $G(V, E)$ be a graph with $\abs{V} = n$. Then $\edim(G) = n - 1 $ if and only if for any distinct $ v_1, v_2 \in V $ there exists $ u \in V $ such that $v_1u \in E, v_2u \in E$ and $u$ is adjacent to all non-mutual neighbors of $v_1, v_2$.
\end{theorem}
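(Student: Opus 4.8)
The plan is to reduce the statement to a clean description of when two distinct edges of $G$ have almost the same distances to every vertex. By the observation that any $n-1$ vertices form an edge generator, $\edim(G) = n-1$ holds if and only if for \emph{every} pair of distinct vertices $v_1, v_2$ the set $V \setminus \{v_1,v_2\}$ fails to be an edge generator; equivalently, there are distinct edges $e_1 \neq e_2$ with $d(e_1, w) = d(e_2, w)$ for all $w \in V \setminus \{v_1,v_2\}$. Write $D(e_1, e_2) = \{w \in V : d(e_1,w) \neq d(e_2,w)\}$; then everything comes down to analyzing pairs of edges with $D(e_1,e_2) \subseteq \{v_1,v_2\}$. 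The first step is a structural lemma: if $e_1 \neq e_2$ and $D(e_1, e_2) \subseteq \{v_1, v_2\}$, then $e_1$ and $e_2$ share a vertex $u \notin \{v_1,v_2\}$ and in fact $\{e_1,e_2\} = \{uv_1, uv_2\}$, so $u$ is a common neighbor of $v_1$ and $v_2$. This is routine: disjoint edges $xy$ and $zt$ are distinguished by all four of $x,y,z,t$, so $\abs{D(e_1,e_2)} \ge 4$; and if $e_1 = ua$, $e_2 = ub$ with $a \neq b$, then $d(e_1,a) = 0 \neq d(e_2,a)$ and symmetrically for $b$, so $\{a,b\} \subseteq D(e_1,e_2) \subseteq \{v_1,v_2\}$ forces $\{a,b\} = \{v_1,v_2\}$ (and $u \neq v_1, v_2$ since $G$ is simple).

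For the ``if'' direction, given the condition I would fix $v_1, v_2$, take the guaranteed common neighbor $u$ adjacent to all non-mutual neighbors of $v_1, v_2$, and show $D(uv_1, uv_2) = \{v_1,v_2\}$. Since $v_i u \in E$, the triangle inequality gives $d(v_i,w) \in \{d(u,w)-1, d(u,w), d(u,w)+1\}$, so $d(uv_i,w) = \min(d(u,w), d(v_i,w))$ equals $d(u,w)-1$ when $d(v_i,w) = d(u,w)-1$ and equals $d(u,w)$ otherwise; hence it suffices to show that for $w \notin \{v_1,v_2\}$ one has $d(v_1,w) = d(u,w)-1$ iff $d(v_2,w) = d(u,w)-1$. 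Assuming the left side, $w \neq v_1$, so let $p$ be the second vertex on a shortest path from $v_1$ to $w$; then $p \in N(v_1)$ and $d(p,w) = d(u,w)-2$. Now $p = v_2$ is impossible (it would force $d(v_2,w) = d(u,w)-2 < d(u,w)-1 \le d(v_2,w)$), and $p$ being a non-mutual neighbor is impossible (then $up \in E$ would give $d(u,w) \le 1 + (d(u,w)-2)$), so $p \in N(v_1) \cap N(v_2)$ and $d(v_2,w) \le 1 + (d(u,w)-2) = d(u,w)-1 \le d(v_2,w)$, forcing equality; the converse is symmetric. Thus $V \setminus \{v_1,v_2\}$ is not an edge generator for any pair, and combined with $\edim(G) \le n-1$ this gives $\edim(G) = n-1$.

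For the ``only if'' direction I would argue contrapositively: if the condition fails for some pair $v_1, v_2$, then $V \setminus \{v_1, v_2\}$ \emph{is} an edge generator, so $\edim(G) \le n-2$. Given distinct edges $e_1, e_2$, if $D(e_1,e_2) \not\subseteq \{v_1,v_2\}$ we are done; otherwise the structural lemma writes them as $e_1 = uv_1$, $e_2 = uv_2$ with $u$ a common neighbor of $v_1,v_2$. Failure of the condition at this common neighbor supplies a non-mutual neighbor $p \notin \{v_1,v_2\}$ with $up \notin E$; say $p \in N(v_1) \setminus N(v_2)$. Then $d(u,p) \ge 2$ and $d(v_2,p) \ge 2$, so $d(uv_1,p) = 1$ while $d(uv_2,p) \ge 2$, i.e. $p$ distinguishes $e_1$ and $e_2$, a contradiction. (If $v_1,v_2$ have no common neighbor at all, the structural lemma already shows no such bad pair of edges exists, and the condition fails for that pair as well, so the equivalence is consistent.)

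I expect the ``if'' direction, specifically the shortest-path transfer argument, to be the crux: one must verify that the vertex $p$ adjacent to $v_1$ and heading toward $w$ falls into exactly one of the cases ``mutual neighbor of $v_1,v_2$'' or ``non-mutual neighbor adjacent to $u$'', and rule out the degenerate possibilities ($p \in \{v_1,v_2\}$, or $v_1 v_2$ itself being an edge, or very small $n$). Everything else reduces to short distance computations once $D(e_1,e_2)$ has been pinned down by the structural lemma.
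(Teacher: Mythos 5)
Your proposal is correct and follows essentially the same route as the paper: reduce $\edim(G)=n-1$ to the failure of every set $V\setminus\{v_1,v_2\}$, pin any undistinguished pair of edges down to the form $uv_1, uv_2$ (the paper's Claim 1), and then show indistinguishability is exactly the condition that $u$ is adjacent to all non-mutual neighbors (Claims 2 and 3). The only cosmetic differences are that you phrase necessity contrapositively and prove sufficiency by a triangle-inequality/shortest-path argument valid for arbitrary distances, whereas the paper first notes $\diam(G)\leq 2$ and does a distance-$1$-or-$2$ case analysis.
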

\begin{proof}
Suppose  $\edim(G) = n-1$. Then for any distinct $ v_1 ,  v_2 \in V$, the set $  V \setminus{ \{ v_1, v_2 \} }$ doesn't generate the edges of $G$. Fix some $v_1$ and $v_2$ and let $S = V \setminus \{v_1, v_2 \}$. If $S$ doesn't generate the edges of $G$, there must exist two edges that have the same distances to all elements of $S$. Call them $e_1, e_2$. 

\begin{namedprob*}{Claim 1}
 Let $e_1 \neq e_2$ and $d_S(e_1) = d_S(e_2)$. Then $e_1 = v_1u$ and $e_2 = v_2u$ for some $u \in V$. 
\end{namedprob*}

\begin{proof}[Proof of claim 1] Suppose there is a vertex $v \in S$ such that $v$ is on exactly one of the two edges $e_1$ and $ e_2$. Then $v$ distinguishes $e_1$ and $e_2$ since it has distance $0$ to one of them and distance at least $1$ to the other. Thus since we assumed $S$ doesn't distinguish $e_1, e_2$, there can't be such a vertex in $S$. This means all the non-mutual vertices of $e_1, e_2$ must not be in $S$ (so must be in $\{v_1, v_2\}$). This is only possible if $e_1 = v_1u$, $e_2 = v_2u$ for some $u \in V$. This proves the claim. 
\end{proof}
Notice this property restricts G to having $\diam(G) \leq 2$, since we just showed for any choice of distinct $ v_1, v_2 \in V$ there is a $ u \in V $ such that $ v_1u \in E$ and $v_2u \in E$. Thus, $v_1u$ and $v_2u$ have distances $1$ or $2$ to all vertices in $S \setminus{ \{u \}} $. 
\begin{namedprob*}{Claim 2}
Let $e_1 = v_1u, e_2 = v_2u$, and say $d_S(e_1) = d_S(e_2)$. Then $u$ is connected to all nun-mutual neighbors of $e_1, e_2$. 
\end{namedprob*}
\begin{proof}[Proof of claim 2]
Consider a vertex $w \in S\setminus{\{u\}}$. Suppose $w$ is a non-mutual neighbor of $v_1, v_2$, so $wv_1 \in E, wv_2 \not \in E$. Since $w \in S$, by assumption $d(e_2, w) = d(e_1, w)$. Thus since $d(v_1, w) = 1$ and $d(v_2, w) = 2$, we must have $d(u, w) = 1$ (so $uw \in E$). The same holds if we switch $v_1$ and $v_2$. Thus $u$ must be a neighbor of all non-mutual neighbors of $v_1$ and $v_2$. 
\end{proof}
This proves that the stated condition is necessary. It is also sufficient: 
\begin{namedprob*}{Claim 3}
Let  $e_1 = v_1u$, $e_2 = v_2u$ and say $u$ is connected to all non-mutual neighbors of $v_1$ and $v_2$. Then $e_1$ and $e_2$ are indistinguishable by all vertices of $S$.
\end{namedprob*}
\begin{proof}[Proof of claim 3]
 Consider $w \in S$. If $w=u$, $d(e_1, w) = 0 = d(e_2, w)$. Otherwise, $w$ has distance $1 $ or $2$ to $e_1$ and $ e_2$. Say $d(w, e_1) = 1$. There are two cases: 
\begin{enumerate}
\item $ d(w, u) = 1$. Then obviously $d(w, e_2) = 1$.
\item$d(w, v_1) = 1$, $d(w, u) \neq 1$. We know $u$ has to be adjacent to all non-mutual neighbors of $e_1$ and $e_2$. We also know $u$ is not adjacent to $w$. This means $w$ can't be a non-mutual neighbor, so since $w$ is adjacent to $v_1$, $w$ also has to be adjacent to $v_2$. Thus $d(v_2, w) = 1$ and hence $d(e_2, w) = 1$. 
\end{enumerate}
This means that if one of the edges has distance $1$ to $w$, then so does the other. Since we already know the distances from these edges to elements of $S \setminus{\{u\}}$ can only be $1$ or $2$, this proves that $e_1$ and $e_2$ are equidistant from all elements of $S$. 
\end{proof}
This proves the theorem. 
\end{proof}

\begin{corollary} 
Let $G$ be a graph on $n$ vertices. Suppose $\edim(G) = n-1$. Then $\diam(G) \leq 2$ and every edge is in a cycle of length $3$. \end{corollary}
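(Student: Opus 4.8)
The plan is to derive the corollary directly from Theorem~\ref{ncondition}, which says that $\edim(G) = n-1$ if and only if every pair of distinct vertices $v_1, v_2$ has a common neighbor $u$ that is adjacent to all their non-mutual neighbors. Both conclusions should fall out quickly once this characterization is in hand.

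First I would establish $\diam(G) \le 2$. Take any two distinct vertices $v_1, v_2$. By Theorem~\ref{ncondition} there is a vertex $u$ with $v_1 u \in E$ and $v_2 u \in E$, so $d(v_1, v_2) \le d(v_1, u) + d(u, v_2) = 2$. Since this holds for every pair, $\diam(G) \le 2$. (This is exactly the observation the paper already makes in passing between Claim~1 and Claim~2, so I would just quote it cleanly.)

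Next I would show every edge lies in a triangle. Let $v_1 v_2 \in E$ be an arbitrary edge. Apply Theorem~\ref{ncondition} to the pair $v_1, v_2$ to get a vertex $u$ with $v_1 u \in E$ and $v_2 u \in E$. I need $u \ne v_1$ and $u \ne v_2$; this holds because the graph is simple (no loops), so $v_1 u \in E$ forces $u \ne v_1$, and similarly $u \ne v_2$. Then $v_1, v_2, u$ are three distinct pairwise-adjacent vertices, i.e.\ they form a $3$-cycle containing the edge $v_1 v_2$. Hence every edge is in a cycle of length $3$.

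I do not anticipate a genuine obstacle here, since both statements are essentially immediate consequences of the existence of the common neighbor $u$ supplied by the theorem; the only point requiring a word of care is the distinctness of $u$ from $v_1, v_2$ in the triangle argument, which is handled by simplicity of $G$. One could alternatively note that ``every edge in a triangle'' already implies $\diam(G) \le 2$ is not automatic, so the two conclusions really are being read off separately from the same application of the theorem to adjacent versus non-adjacent pairs.
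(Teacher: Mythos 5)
Your proof is correct and follows essentially the same route as the paper: both conclusions are read off from the common neighbor $u$ supplied by Theorem~\ref{ncondition}, applied to non-adjacent pairs for $\diam(G)\le 2$ and to adjacent pairs for the triangle. Your extra remark that $u \neq v_1, v_2$ by simplicity of $G$ is a small point the paper leaves implicit, but otherwise the arguments coincide.
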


\begin{proof}
Theorem \ref{ncondition} implies that for any $v_1 \neq v_2 $ there is a $ u \in V $ such that $v_1u \in E$ and $v_2u \in E$, so $\diam(G) \leq 2$. Moreover, for any $xy \in E$ there exists $u\in V$ such that $ xu \in E$ and $yu \in E$. This means $xy$ is in a cycle $xuy$ of length $3$. 
\end{proof}

\section{The $\bm{\edim(G)}$ to $\bm{\dim(G)}$ ratio}
 A natural question that arises in the study of the edge dimension is how it is related to the dimension of the same graph.
\begin{question} For what triples $(x, y, n)$ does there exist a graph $G$ with $\dim(G) = x, \edim(G) = y$ and $\abs{V} = n$?
\end{question}
Kelenc, Tratnik and Yero give examples of graphs for which $\dim(G)<\edim(G)$, $ \dim(G) = \edim(G)$, and $\dim(G)>\edim(G)$. Moreover, they show that there exist graphs realizing all triples $(x, y, n)$ such that $$1<x\leq y \leq 2x \leq n-2.$$
One of the questions they ask is whether $\frac{\edim(G)}{\dim(G)}$ is bounded from above. In this section we show it's not.
 
\begin{theorem}\label{rationotbounded}
$\frac{\edim(G)}{\dim(G)}$ is not bounded from above.
\end{theorem}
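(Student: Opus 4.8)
The plan is to exhibit an explicit family of graphs $\{G_k\}$ for which $\dim(G_k)$ stays bounded (or grows slowly) while $\edim(G_k)$ grows without bound, so that the ratio $\edim/\dim \to \infty$. A natural source of such examples is a graph built out of many twin-like or nearly-twin vertices: if $G$ has a large set of vertices that are pairwise hard to separate by edge-distances, then $\edim$ must be large, whereas the vertex metric dimension can remain small if the global metric structure is simple. I would look for a construction where a small ``coordinate'' set of vertices pins down every vertex (keeping $\dim$ small), but where many edges incident to a common vertex, or many edges inside a clique-like region, all receive the same distance tuple unless a large fraction of the vertices are thrown into $S$.

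The key steps, in order, would be: (1) define the family $G_k$ precisely — my first guess is something like a complete bipartite graph $K_{2,k}$, or a ``sunlet''/friendship-type graph, or more likely a graph of small diameter with a large clique or large independent set attached so that the previous section's machinery (Theorem~\ref{ncondition} and its corollary) can be invoked; in fact, the cleanest route is probably to use graphs with diameter $2$ and show directly that $\edim$ is forced to be close to $n-1$ while $\dim$ is logarithmic or constant. (2) Prove the upper bound on $\dim(G_k)$: produce an explicit small resolving set and check all vertex distance tuples are distinct — this is usually a short, direct verification. (3) Prove the lower bound on $\edim(G_k)$: show that any set $S$ omitting too many vertices fails to distinguish some pair of edges, using exactly the logic of Claim~1 in the proof of Theorem~\ref{ncondition} — namely, two edges $v_1u$ and $v_2u$ sharing a vertex $u$ are distinguished only by vertices on exactly one of them or by neighbors that differentiate $v_1$ from $v_2$, so if many such ``triangle'' configurations exist and the non-mutual neighbors are few, $S$ must contain almost all of $V$. (4) Combine (2) and (3) to conclude $\edim(G_k)/\dim(G_k) \to \infty$.

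I would expect the main obstacle to be step (3): getting a clean lower bound on $\edim$ that is genuinely large (linear in $n$, or at least growing much faster than the $\dim$ bound). The danger is that a clever resolving set for edges exploits some asymmetry in the construction, so the family has to be symmetric enough that no small $S$ works, yet the same symmetry typically inflates $\dim$ too. Balancing these competing demands — high edge-dimension from local indistinguishability, low vertex-dimension from a simple global coordinate system — is the crux. A promising way to thread this needle is to start from a small-diameter ``gadget'' with many mutually indistinguishable edges and take a disjoint-ish union or a carefully chosen product so that one cheap set of landmarks resolves all vertices while each gadget independently demands its own near-complete set of landmarks to resolve edges; summing the per-gadget edge-dimension costs then drives the ratio up. I would also double-check small cases by hand to make sure the claimed $\edim$ lower bound is not defeated by an unexpected coincidence of distance tuples, and only then write the general argument.
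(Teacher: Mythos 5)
Your proposal correctly identifies the right strategy --- keep $\dim$ small via a ``binary coordinate'' landmark set while forcing $\edim$ to be nearly $\abs{V}-1$ via a diameter-$2$ structure with a dominating vertex --- but as written it has a genuine gap: no family $G_k$ is actually constructed, and the crucial step (3), the lower bound on $\edim(G_k)$, is explicitly left open. Several of the candidates you float would not work: $K_{2,k}$ already has $\dim(K_{2,k}) = k$, so the ratio stays bounded; friendship-type graphs likewise have metric dimension growing linearly in the number of gadgets; and a ``disjoint-ish union'' of gadgets suffers exactly the problem you worry about --- each gadget needs its own vertex-landmarks too, so $\dim$ inflates in lockstep with $\edim$. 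The tension you describe (symmetry forces $\edim$ up but also forces $\dim$ up) is real, and resolving it is the entire content of the theorem; a plan that stops short of an explicit construction and a verified lower bound does not yet prove the statement.

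The paper closes this gap with the graph $F_k$: take a clique $B = \{b_1,\ldots,b_k\}$ and a clique $A = \{a_S \mid S \subseteq B\}$ of size $2^k$, and join $a_S$ to $b_i$ exactly when $b_i \in S$. Then $B$ resolves all vertices (the tuple of distances to $B$ literally encodes the subset $S$), and Lemma \ref{diambound} gives the matching lower bound, so $\dim(F_k) = k$. For the edge dimension no bespoke lower-bound argument of the kind you sketch in step (3) is needed: since $a_B$ is adjacent to every other vertex, Lemma \ref{largedegree} immediately gives $\edim(F_k) \in \{n-1, n-2\}$ with $n = k + 2^k$, and Theorem \ref{ncondition} (applied to the pair $a_\emptyset, a_B$, whose non-mutual neighbors are all of $B$ but whose only common neighbor adjacent to all of $B$ would have to be $a_B$ itself) pins it down to $\edim(F_k) = k + 2^k - 2$. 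The ratio $(k + 2^k - 2)/k$ is unbounded. If you want to salvage your outline, the fix is precisely to make the construction one with a universal vertex so that the cited lemma does the heavy lifting on the $\edim$ side, and with subset-indexed adjacencies so that $k$ landmarks resolve $2^k$ vertices on the $\dim$ side.
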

We prove this theorem by finding a graph $F_k$ with $\edim(F_k) = k + 2^k -2$, and $\dim(F_k) = k$. The graph $F_k$ is defined as follows: 
\begin{defn}
For a positive integer $k$, let $F_k$ be the graph on vertex set $A \cup B$, where $ B = \{b_1 \ldots b_k\}$ and $A = \{ a_S | S \subseteq B \}$. Let $b_i, b_j$ be adjacent for all $b_i, b_j \in B$ with $b_i \neq b_j$, and let  $a_S, a_T$ be adjacent for all $a_S, a_T \in A$ with $a_S \neq a_T$. For any $b_i \in B, a_S \in A$ let $b_i, a_S$ be adjacent if and only if $b_i \in S$. Notice $\abs{V(F_k)} = k + 2^k$.
\end{defn}
\begin{figure}[hp]
\centering

\includegraphics[scale = 0.15]{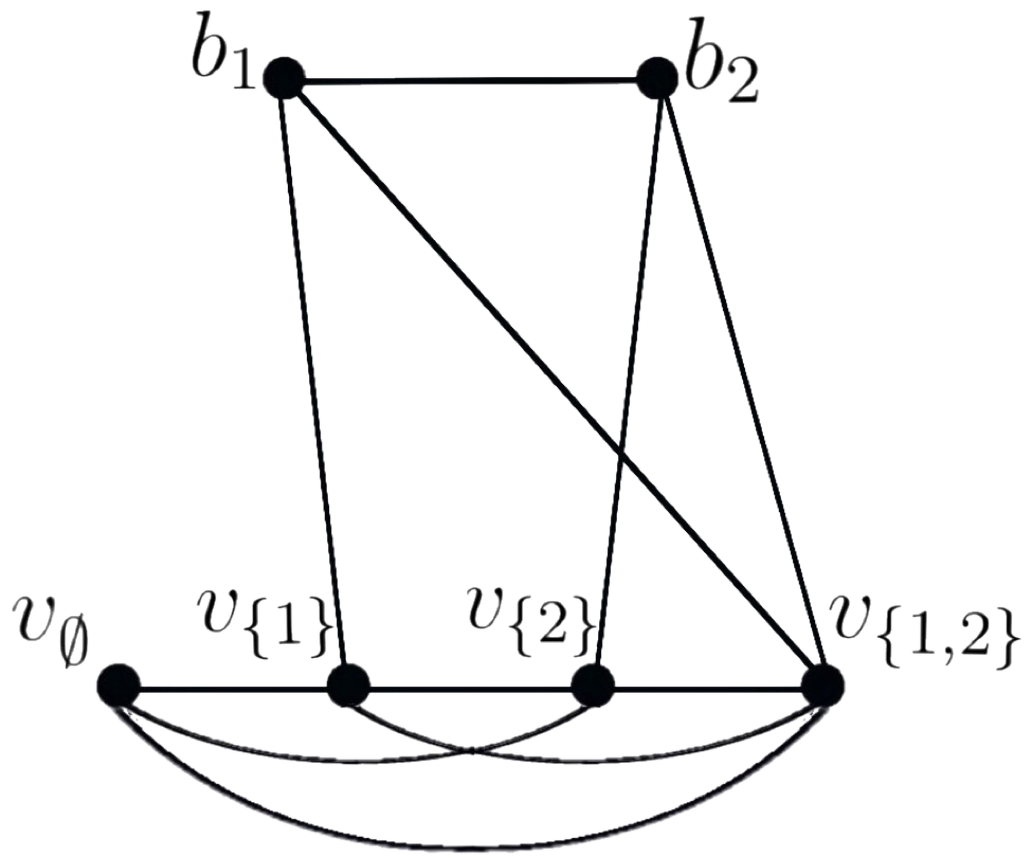}

\caption{ The graph $F_2$.}
\end{figure}

In order to determine some properties of $F_k$, we will use the following results.

\begin{lemma}[\cite{chartrand2000resolvability}]\label{diambound}
Let $G(V, E)$ be a graph with diameter $D$ and $\dim(G) = k$. Then $\abs{V} \leq k + D^k$.
\end{lemma}
We will prove this lemma to demonstrate the motivation for the construction of $F_k$.
\begin{proof}
Let $B = \{ b_1, \ldots, b_k \}$ be a vertex basis for $F_k$ and $V = \{ v_1, \ldots ,v_n \}$ be the vertices. Consider the distance tuples $d_B (v_i)$ for all $v_i \in V$. There are $k$ basis vertex tuples with exactly one $0$ in them (namely those of $b_1, \ldots, b_k$). All others tuples consist of $k$ numbers from $1$ to $D$. This shows there can be no more than $k + D^k$ different distance tuples. But the distance tuples have to be different for all vertices in order for $B$ to be a vertex basis. Thus, there can be no more than $k + D^k$ vertices.
\end{proof}

Below we prove an analogue of Lemma \ref{diambound} for edge dimension (we won't be using it for the proof of Theorem \ref{rationotbounded}).
\begin{theorem}
Let $G(V, E)$ be a simple connected graph with diameter $D$, $\abs{V} = n$, and $\edim(G) = k$. Then: $$\abs{E} \leq  \binom{k}{2} +  kD^{k-1} + D^k .$$
\end{theorem}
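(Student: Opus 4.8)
The plan is to mimic the proof of Lemma~\ref{diambound}, but counting distance tuples of \emph{edges} rather than vertices, and being careful about which tuples can contain entries equal to $0$. Let $B=\{b_1,\ldots,b_k\}$ be an edge basis. For an edge $e\in E$, the tuple $d_B(e)$ has each coordinate in $\{0,1,\ldots,D\}$, and since $B$ is an edge generator, these tuples are pairwise distinct. The key observation is that a coordinate $d(e,b_i)=0$ means $b_i$ is an endpoint of $e$, so at most two coordinates of any given tuple can be $0$; moreover, if two coordinates are $0$, say $d(e,b_i)=d(e,b_j)=0$, then $e=b_ib_j$ is determined, so there are at most $\binom{k}{2}$ edges whose tuple has two zero coordinates.

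Next I would bound the number of edges whose tuple has exactly one zero coordinate. If $d(e,b_i)=0$ and all other coordinates are nonzero, then $b_i$ is an endpoint of $e$ and the remaining $k-1$ coordinates each lie in $\{1,\ldots,D\}$; choosing which coordinate is the zero ($k$ ways) and the values of the other $k-1$ coordinates ($D^{k-1}$ ways) gives at most $kD^{k-1}$ such tuples. Finally, the edges whose tuple has no zero coordinate have all $k$ coordinates in $\{1,\ldots,D\}$, giving at most $D^k$ possibilities. Summing the three disjoint cases yields
\[
\abs{E}\le \binom{k}{2}+kD^{k-1}+D^k,
\]
since distinct edges have distinct tuples.

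I expect no serious obstacle here; the argument is essentially a refined pigeonhole count. The only point requiring a little care is the ``two zeros'' case: one must note that having two zero coordinates pins down the edge completely (it is the edge joining those two basis vertices), so these contribute $\binom{k}{2}$ rather than something like $\binom{k}{2}D^{k-2}$. One should also remark that $\binom{k}{2}$ is an upper bound even if not all pairs $b_ib_j$ are actually edges of $G$, and that the three families of tuples (zero, one, or two vanishing coordinates) are genuinely disjoint, so the bounds add. No further structural facts about $G$ are needed beyond connectedness (to make distances finite and at most $D$) and the definition of edge basis.
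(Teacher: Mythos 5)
Your proposal is correct and follows essentially the same argument as the paper: classifying the distance tuples on an edge basis by their number of zero coordinates and bounding each class by $\binom{k}{2}$, $kD^{k-1}$, and $D^k$ respectively. The extra remarks you add (that two zeros pin down the edge, and that the three cases are disjoint) are fine but do not change the approach.
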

\begin{proof} 
Let $S$ be an edge basis. Consider the distance tuples on $S$ of the edges of $G$. There are at most $\binom{k}{2}$ distance tuples with two zeros (corresponding to the edges between pairs of vertices of $S$),  at most $kD^{k-1}$ tuples with one zero ($k$ ways to choose the position of the zero, $D^{k-1}$ options for the remaining places), and at most $D^k$ tuples with no zeros. Thus, since the tuples have to be different for all elements of $E$, we have $\abs{E} \leq  \binom{k}{2} +  kD^{k-1} + D^k$.
\end{proof}

\begin{lemma}[\cite{2016arXiv160200291K}]\label{largedegree}
Let $G(V, E)$ be a graph with $\abs{V} = n$ and $\Delta(G) = n-1$. Then:
$$\edim(G) = n-1 \text{ or } n-2.$$
\end{lemma}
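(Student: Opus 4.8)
The plan is to prove the two inequalities $\edim(G)\le n-1$ and $\edim(G)\ge n-2$ separately. The upper bound is immediate and was already observed at the start of Section~2: any $n-1$ vertices form an edge metric generator. So the entire content of the lemma is the lower bound $\edim(G)\ge n-2$, and the idea is to exploit the universal vertex guaranteed by $\Delta(G)=n-1$.

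Fix a vertex $v$ with $\deg(v)=n-1$, so $v$ is adjacent to every other vertex; in particular $\diam(G)\le 2$ and $d(v,x)=1$ for all $x\ne v$. The key observation I would isolate is that the edges through $v$ all look alike from outside $\{v\}$: for any two distinct vertices $a,b\in V\setminus\{v\}$, the edges $va$ and $vb$ exist, are distinct, and satisfy $d(va,v)=0=d(vb,v)$, while for every $s\in V\setminus\{v,a,b\}$ we have $d(va,s)=\min(d(v,s),d(a,s))=1=\min(d(v,s),d(b,s))=d(vb,s)$, since $d(v,s)=1$ and $d(a,s),d(b,s)\ge 1$. Hence the only vertices that can distinguish $va$ from $vb$ are $a$ and $b$ themselves.

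With this in hand the conclusion is short. Suppose $S$ is an edge metric generator with $\abs{S}<n-2$, so $\abs{V\setminus S}\ge 3$. Since at most one element of $V\setminus S$ can equal $v$, there are two distinct vertices $a,b\in V\setminus S$ with $a,b\ne v$. By the observation, no vertex of $S$ distinguishes the distinct edges $va$ and $vb$, contradicting the assumption that $S$ is an edge metric generator. Therefore every edge metric generator has at least $n-2$ vertices, i.e.\ $\edim(G)\ge n-2$, and together with $\edim(G)\le n-1$ this proves the lemma.

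I do not expect a serious obstacle here; the only points requiring a little care are checking that $va\ne vb$ as edges (clear since $a\ne b$) and that both edges actually exist (clear since $v$ is universal), together with noticing that the argument never uses whether $v$ itself lies in $S$, so no case analysis on the position of $v$ is needed.
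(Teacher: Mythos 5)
Your proof is correct: the bound $\edim(G)\le n-1$ is the trivial one from Section~2, and your key observation that for a universal vertex $v$ and any $a,b\ne v$ the edges $va$, $vb$ satisfy $d(va,s)=1=d(vb,s)$ for every $s\notin\{v,a,b\}$ (and $d(va,v)=0=d(vb,v)$), so that only $a$ and $b$ can distinguish them, immediately forces every edge metric generator to omit at most two non-$v$ vertices, giving $\edim(G)\ge n-2$. Note that the paper itself gives no proof of this lemma (it is imported from the cited Kelenc--Tratnik--Yero article), and your argument is essentially the standard one used there, so there is nothing to correct.
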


\begin{lemma}[\cite{2016arXiv160200291K}]\label{twolargedegree}
Let $G(V, E)$ be a graph with $\abs{V} = n$ and  $\Delta(G) = n-1$. Suppose there are at least two vertices with degree $n-1$. Then: $$\edim(G) = n-1.$$
\end{lemma}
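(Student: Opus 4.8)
The plan is to deduce the lemma directly from Theorem~\ref{ncondition}. Since any $n-1$ vertices already form an edge generating set, we always have $\edim(G)\le n-1$, so it suffices to verify the combinatorial criterion of Theorem~\ref{ncondition}: for every pair of distinct vertices $v_1,v_2$ there is a vertex $u$ with $v_1u,v_2u\in E$ that is moreover adjacent to every non-mutual neighbor of $v_1$ and $v_2$. Fix two distinct vertices $u_1,u_2$ of degree $n-1$; each is adjacent to every other vertex of $G$. (We may assume $n\ge 3$, the case $n=2$, where $G=K_2$, being immediate.) The whole proof is then the observation that the two universal vertices $u_1,u_2$ make the above criterion automatic.

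To carry this out, fix distinct $v_1,v_2\in V$ and split into two cases. If $\{u_1,u_2\}\ne\{v_1,v_2\}$, then, since both are two-element sets, $\{u_1,u_2\}\setminus\{v_1,v_2\}$ is nonempty, so we may take $u$ to be a full-degree vertex lying outside $\{v_1,v_2\}$; being adjacent to everything, $u$ is adjacent to $v_1$, to $v_2$, and to every non-mutual neighbor of the pair, so it is the required witness. If instead $\{v_1,v_2\}=\{u_1,u_2\}$, then both $v_1$ and $v_2$ have full degree, so $N(v_1)=V\setminus\{v_1\}$ and $N(v_2)=V\setminus\{v_2\}$; hence $N(v_1)\cup N(v_2)=V$ while $N(v_1)\cap N(v_2)=V\setminus\{v_1,v_2\}$, and so the non-mutual neighbors of $v_1,v_2$ are precisely $v_1$ and $v_2$ themselves. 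Choosing any $u\in V\setminus\{v_1,v_2\}$ (nonempty since $n\ge 3$), $u$ is adjacent to $v_1$ and $v_2$ because those have full degree, hence also to the two non-mutual neighbors, so $u$ works here as well. In both cases the criterion of Theorem~\ref{ncondition} is met, giving $\edim(G)=n-1$.

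There is no genuinely hard step; the only points needing care are bookkeeping: making sure the witness $u$ is distinct from both $v_1$ and $v_2$ (this is exactly why having two full-degree vertices to choose from, together with a third vertex in the coincident case, is needed), and the one-line computation that two full-degree vertices have only each other as non-mutual neighbors. One could instead phrase the argument as a contradiction — assume $\edim(G)=n-2$ so that $V\setminus\{v_1,v_2\}$ generates $E(G)$ for some pair $v_1,v_2$, then produce from the witness $u$ the two edges $v_1u$ and $v_2u$ that Claim~3 in the proof of Theorem~\ref{ncondition} shows are indistinguishable by $V\setminus\{v_1,v_2\}$ — but invoking Theorem~\ref{ncondition} directly is cleaner, and (for the record) the argument does not even need Lemma~\ref{largedegree}.
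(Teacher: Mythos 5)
Your derivation is sound, but be aware that the paper contains no proof of this lemma to compare against: it is imported verbatim from \cite{2016arXiv160200291K} (note the citation in the lemma header), and in this paper it is only used later, never established. What you give is therefore an independent derivation from Theorem~\ref{ncondition}, and it is non-circular, since the proof of Theorem~\ref{ncondition} nowhere uses this lemma. The argument itself checks out: since $\edim\le n-1$ always, it suffices to verify the criterion of Theorem~\ref{ncondition}; if $\{u_1,u_2\}\neq\{v_1,v_2\}$ then (two distinct two-element sets never containing one another) some universal vertex $u$ lies outside $\{v_1,v_2\}$, and being adjacent to every vertex other than itself it is adjacent to $v_1$, $v_2$, and to every non-mutual neighbor --- it cannot itself be a non-mutual neighbor, being a common neighbor of the pair; and if $\{v_1,v_2\}=\{u_1,u_2\}$, then the non-mutual neighbors are exactly $v_1$ and $v_2$, so any third vertex serves as the witness. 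This is a clean alternative to proving the statement directly (e.g.\ by exhibiting, for each pair $v_1,v_2$, two edges through a universal vertex that $V\setminus\{v_1,v_2\}$ cannot distinguish, which is essentially what Claim~3 inside the proof of Theorem~\ref{ncondition} encapsulates); the price is that it leans on the full characterization theorem, but within this paper that theorem is already available.

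The one point to repair is your $n=2$ aside. For $G=K_2$ the criterion of Theorem~\ref{ncondition} fails (the two vertices have no common neighbor $u$), so the paper's framework would give $\edim(K_2)\neq n-1$; whether $\edim(K_2)$ is $0$ or $1$ is purely a convention about whether the empty set counts as an edge generator when there is a single edge. So the case $n=2$ is not ``immediate'' in the direction you claim and in fact sits in tension with the very theorem you invoke; the right move is simply to exclude this degenerate case (the lemma is of interest for $n\ge 3$, which your two cases cover completely, since in Case~2 you already use $n\ge 3$ to find the third vertex).
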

We will now use these lemmas to calculate $\dim(F_k)$ and $\edim(F_k)$. 
\begin{theorem}\label{propofgraph}
For any positive integer $k$, $$\dim(F_k)=k \text{ and }\edim(F_k) = k + 2^k - 2.$$ 
\end{theorem}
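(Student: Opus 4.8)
\emph{Proof proposal.} The plan is to establish the two equalities separately; throughout write $n := k + 2^k = \abs{V(F_k)}$.

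\emph{The dimension.} First I would note that $\diam(F_k) = 2$: since $A$ and $B$ each induce a clique, any two vertices lying both in $A$ or both in $B$ are at distance $1$; for $b_i \in B$ and $a_S \in A$ we have $d(b_i, a_S) = 1$ when $b_i \in S$, and otherwise $d(b_i, a_S) = 2$ via the path $b_i - a_{\{b_i\}} - a_S$ (this path already shows $d(a_\emptyset, b_i) = 2$, so the diameter is exactly $2$). Then Lemma \ref{diambound} applied to $F_k$ gives $n = k + 2^k \le \dim(F_k) + 2^{\dim(F_k)}$, and since $t \mapsto t + 2^t$ is strictly increasing this forces $\dim(F_k) \ge k$. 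For the matching upper bound I would verify that $B = \{b_1, \dots, b_k\}$ is a vertex generator: $d_B(b_i)$ is the tuple with a $0$ in coordinate $i$ and $1$'s elsewhere, while $d_B(a_S)$ has coordinate $i$ equal to $1$ if $b_i \in S$ and equal to $2$ if $b_i \notin S$. These $n$ tuples are pairwise distinct (the ones coming from $A$ contain no zero, and $S \mapsto d_B(a_S)$ is injective), so $\dim(F_k) \le k$.

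\emph{The edge dimension.} The vertex $a_B$ is adjacent to every other vertex of $F_k$, so $\Delta(F_k) = n-1$, and Lemma \ref{largedegree} gives $\edim(F_k) \in \{n-1, n-2\}$. It therefore suffices to rule out $\edim(F_k) = n-1$, and by Theorem \ref{ncondition} it is enough to exhibit a single pair $v_1 \ne v_2$ for which no vertex $u$ is simultaneously adjacent to $v_1$, to $v_2$, and to all non-mutual neighbors of $v_1$ and $v_2$. I would take $v_1 = a_B$ and $v_2 = a_\emptyset$. A direct check shows the non-mutual neighbors of $a_B$ and $a_\emptyset$ are exactly $B \cup \{a_B, a_\emptyset\}$: each $b_i$ is adjacent to $a_B$ but not to $a_\emptyset$; each $a_T$ with $\emptyset \ne T \ne B$ is adjacent to both; and $a_B, a_\emptyset$ are each adjacent to the other but not to itself. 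A common neighbor $u$ of $a_B$ and $a_\emptyset$ must lie in $A \setminus \{a_B, a_\emptyset\}$, hence $u = a_S$ with $\emptyset \ne S \ne B$; but then some $b_i \notin S$, so $u$ is not adjacent to $b_i$, which is a non-mutual neighbor. Hence no such $u$ exists, so $\edim(F_k) \ne n-1$, and therefore $\edim(F_k) = n - 2 = k + 2^k - 2$.

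The only step requiring genuine care is identifying the non-mutual neighbors of $a_B$ and $a_\emptyset$ and checking that every candidate common neighbor fails; everything else reduces to one-line verifications. (In the degenerate case $k = 1$, $F_1$ is the path on three vertices and the formulas read $\dim(F_1) = \edim(F_1) = 1$, consistent with the argument above.)
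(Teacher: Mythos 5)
Your proposal is correct and follows essentially the same route as the paper: Lemma \ref{diambound} plus the explicit generator $B$ for $\dim(F_k)=k$, and Lemma \ref{largedegree} together with Theorem \ref{ncondition} applied to the pair $a_B, a_\emptyset$ to rule out $\edim(F_k)=n-1$. Your only divergence is cosmetic: you list the full set of non-mutual neighbors $B \cup \{a_B, a_\emptyset\}$ and check all candidate common neighbors directly, where the paper just notes that $B$ consists of non-mutual neighbors and that $a_B$ is the only vertex adjacent to all of $B$.
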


\begin{proof}
Since $a_B = a_{\{b_1, \ldots, b_k\}}$ is connected to all the other vertices of $F_k$,  $\diam(F_k) = 2$.  Since $\abs{V(F_k)} = k + 2^k$, Lemma \ref{diambound} guarantees $\dim(F_k) \geq k$. Moreover, $B$ is a vertex generating set since the distance tuples $d_B$ are different for all elements of $V(F_k)$ (this follows immediately from construction of $F_k$). Thus, $$\dim(F_k) = k.$$

Notice $a_B$ is connected to every vertex of $F_k$ by construction, so by Lemma \ref{largedegree} we know $\edim(F_k)$ is either $\abs{V(F_k)}- 1$ or $\abs{V(F_k)} -2$. Consider the vertices $a_{\emptyset}$ and $ a_B$. By construction of $F_k$ we know $a_{\emptyset}$ is not connected to any elements of $B$, and $a_B$ is connected to all of them. This means all elements of $B$ are non-mutual connections of $a_{\emptyset}$ and $a_B$. Also, notice that $a_B$ is the only vertex adjacent to all elements of $B$. This shows the condition of Theorem \ref{ncondition} doesn't hold for $F_k$, so 
$$\edim(F_k) = \abs{V(F_k)} - 2 = k + 2^k - 2.$$
\end{proof}
\begin{proof}[Proof of Theorem \ref{rationotbounded}]
By Theorem \ref{propofgraph}, $F_k$ is a counterexample to the boundedness of the $\edim(G)/\dim(G)$ ratio. 
\end{proof}
Another related question we could ask is the following:\\
Let $G(V, E)$ be a graph with $\abs{V} = n$ and $\edim(G) = n-1$. How large can $\dim(G)$ be? Consider the following example:
\begin{defn}
For a positive integer $k$, define $H_k$  = $F_k + K_1$.
\end{defn}
We will preserve the notation for the vertices of the subgraph $F_k$ of $H_k$ and call the $K_1$ vertex $t$.
\begin{theorem}
For any positive integer $k$, $$\dim(H_k) = k+1 \text{ and } \edim(H_k) = k + 2^k = n-1.$$
\end{theorem}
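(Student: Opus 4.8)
The plan is to treat the two claims separately, relying on the structural result of Theorem~\ref{ncondition} for the edge dimension and on the diameter bound of Lemma~\ref{diambound} for the metric dimension. First I would record the basic geometry of $H_k = F_k + K_1$: since $t$ is adjacent to every vertex of $F_k$ and $F_k$ already has diameter $2$, we have $\diam(H_k) = 2$ and $\Delta(H_k) = n-1$ (witnessed by $t$, and also by $a_B$, which was adjacent to everything in $F_k$ and is now also adjacent to $t$). Thus there are \emph{two} vertices of degree $n-1$, so Lemma~\ref{twolargedegree} immediately gives $\edim(H_k) = n-1 = k + 2^k$. This is the easy half and should be just a couple of sentences.

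For $\dim(H_k)$, the plan is a matching upper and lower bound. For the lower bound, $\diam(H_k) = 2$ and $\abs{V(H_k)} = k + 2^k + 1$, so Lemma~\ref{diambound} with $D = 2$ gives $k + 2^k + 1 \le \dim(H_k) + 2^{\dim(H_k)}$, which forces $\dim(H_k) \ge k+1$ (since $k + 2^k < (k+1) + 2^{k+1}$ but we need to check $k + 2^k + 1 > k + 2^k$, i.e.\ that $\dim = k$ is impossible — indeed $k + 2^k < k + 2^k + 1$). For the upper bound I would exhibit an explicit generating set of size $k+1$; the natural candidate is $B \cup \{a_\emptyset\}$ (or $B \cup \{t\}$). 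One must check all distance tuples are distinct: within $F_k$, the set $B$ already separated all vertices of $F_k$ by the previous theorem's argument, the only subtlety being that in $H_k$ some distances that were $2$ in $F_k$ might collapse — but no: $t$ is the only new vertex and adding $t$ does not create new short paths between old vertices of the same ``type'' unless... actually one must be careful, since $t$ is adjacent to all of $F_k$, any two vertices of $F_k$ are now at distance $\le 2$ via $t$, but they were already at distance $\le 2$. The genuine issue is whether $B$ still \emph{separates} pairs in $F_k$ once $t$ is around (distances among $F_k$-vertices are unchanged because $t$ only offers length-$2$ detours and $\diam F_k = 2$ already), and whether the extra coordinate handles $t$ itself: $d(t, b_i) = 1$ for all $i$, so $d_B(t) = (1,1,\ldots,1)$, which coincides with $d_B(a_B)$ — hence we genuinely need the $(k+1)$st coordinate, and $a_\emptyset$ works since $d(a_\emptyset, a_B) = 1$ while $d(a_\emptyset, t) = 1$ as well... so I should instead check $a_\emptyset$ versus $t$ gets separated, which needs $d(a_\emptyset, \cdot)$ to differ; here $t a_\emptyset \in E(H_k)$ and $a_\emptyset a_B \in E(F_k)$, giving $d(a_\emptyset, t) = 1 = d(a_\emptyset, a_B)$, so $a_\emptyset$ does \emph{not} separate $t$ from $a_B$ either. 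The right fifth... the right extra landmark must be chosen as some $a_S$ with $S \ne B, \emptyset$, or better, argue directly: $t$ is the unique vertex at distance $1$ from \emph{every} other vertex, so $t$ is distinguished from everything by any landmark $v \ne t$ for which $d(v,t)=1$ but $d(v,w)=2$ for some $w$ — since every $v\ne t$ has such a $w$ unless $v = a_B$. So the clean choice is to take the landmark set $B \cup \{a_B\}$: then $a_B$ has distance $0$ to itself, $1$ to $t$... no, this still needs checking. I expect the main obstacle to be exactly this: choosing the $(k+1)$st landmark correctly and verifying it breaks the two ``universal'' vertices $t$ and $a_B$ apart from each other and from everything else.

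Concretely, I would argue: take $W = B \cup \{a_B\}$, $\abs{W} = k+1$. For $v \in V(F_k)$, the $B$-coordinates of $d_W(v)$ already determine $v$ among vertices of $F_k$ (unchanged from the proof of Theorem~\ref{propofgraph}, since distances inside $F_k$ are not shortened by passing through $t$). The only vertex outside $F_k$ is $t$, with $d_B(t) = (1,\ldots,1) = d_B(a_B)$; but $d(a_B, a_B) = 0 \ne 1 = d(a_B, t)$, so the $a_B$-coordinate separates $t$ from $a_B$, and $t$ from every other $a_S$ (which has $d(a_B, a_S) \in \{0,1\}$ but is already separated by $B$-coordinates from $a_B$, hence has a different full tuple), and $t$ from every $b_i$ (separated by $B$-coordinates). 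Hence $W$ is a vertex generating set and $\dim(H_k) \le k+1$. Combined with the Lemma~\ref{diambound} lower bound, $\dim(H_k) = k+1$. I would then close by remarking that $H_k$ shows $\dim$ can be as large as $k+1$ while $\edim = n-1$, complementing the earlier construction.
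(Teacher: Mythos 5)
Your proposal is correct and follows essentially the paper's route: Lemma~\ref{twolargedegree} (the two universal vertices $t$ and $a_B$) gives $\edim(H_k)=n-1$, Lemma~\ref{diambound} with diameter $2$ gives $\dim(H_k)\ge k+1$, and an explicit $(k+1)$-vertex generating set gives the matching upper bound; the paper takes $B\cup\{t\}$ where you take $B\cup\{a_B\}$, and both are valid. Your mid-proof hesitation about $B\cup\{t\}$ was unnecessary, since the $t$-coordinate alone separates $t$ (distance $0$) from $a_B$ (distance $1$), so the paper's choice resolves the one $B$-tuple collision just as directly as yours.
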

\begin{proof}
Due to Lemma \ref{diambound}, $\dim(H_k) \geq k+1$. We claim equality holds, and $B \cup \{ t\}$ is a vertex generating set. Indeed, consider any two vertices $x$ and $y$ in $V(H_k)$. If either of them is in $B \cup \{ t\}$, it distinguishes them. Otherwise, both $x$ and $y$ are in $A$. By construction of $F_k$, the vertices of $A$ have pairwise different distance tuples on $B$ consisting of $1's$ and $2's$. Notice that distance tuples of $A$ on $B$ are the same in $H_k$ as in $F_k$. Indeed, for $a \in A$ and $b \in B$, any path from $a$ to $b$ via $t$ will have length at least $2$, so can't be shorter than shorter than $d(a, b)$ in $F_k$. Hence, all pairs of vertices in $A$ are distinguished by $B$. This means $B \cup \{ t\}$ is a vertex generation set as claimed, so $\dim(H) = k +1$. 

Since $a_B$ and $t$ are connected to all the other vertices of $H_k$, by Lemma \ref{diambound}, $\edim(H_k) = \abs{V(H_k)} - 1 = k + 2^k$. 
\end{proof}

Recall that for a graph $G(V, E)$ with diameter $2$, Lemma \ref{diambound} implies that
 $$\dim(G) + 2^{\dim(G)} \geq \abs{V}.$$

In particular, in the case $\abs{V} = k + 2^k + 1$, this means that we can't make $\dim(G)$ smaller than $k + 1$. Since we showed in section $1$ that graphs $G(V, E)$ with edge dimension $\abs{V}-1$ have to have diameter $2$, this means we  cannot further decrease the dimension if we want the edge dimension to be maximal.

\section{$\bm{\edim}$ for $\bm{G + K_1}$ and $\bm{G \Box P_m}$}
In this section we characterize how the edge dimension changes upon taking a Cartesian product with a path, or upon adding a vertex a vertex adjacent to all the original vertices. 
\begin{theorem}
Let $G(V, E)$ be a graph with $\abs{V} = n$. Suppose for any vertex $ x \in V$ there is another vertex $ u \in V$ such that $V\setminus N(x)  \subseteq N(u)$. Then $\edim(G + K_1) = n$. Otherwise, $\edim(G + K_1) = n-1$.
\end{theorem}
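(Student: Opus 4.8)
The plan is to reduce the whole statement to Theorem~\ref{ncondition} applied to $H := G + K_1$. Write $t$ for the vertex of the $K_1$ factor, so that $V(H) = V \cup \{t\}$, $\abs{V(H)} = n+1$, and $t$ is adjacent in $H$ to every vertex of $V$. Since $\deg_H(t) = n = \abs{V(H)} - 1$, Lemma~\ref{largedegree} already gives $\edim(H) \in \{n-1, n\}$. Hence it suffices to decide exactly when $\edim(H) = \abs{V(H)} - 1 = n$, and by Theorem~\ref{ncondition} this happens precisely when for every pair of distinct $p, q \in V(H)$ there is a vertex $u$ with $pu, qu \in E(H)$ that is adjacent to all non-mutual neighbors of $p$ and $q$. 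So the proof is just a matter of unwinding this condition for the two kinds of pairs in $V(H)$.

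First I would dispose of the pairs $\{v_1, v_2\}$ with $v_1, v_2 \in V$: here $u = t$ always works. Indeed $t$ is adjacent to both $v_1$ and $v_2$; it lies in $N_H(v_1) \cap N_H(v_2)$ so it is not itself a non-mutual neighbor; and every non-mutual neighbor of $v_1, v_2$ lies in $V$, to all of which $t$ is adjacent. So for such pairs the Theorem~\ref{ncondition} condition is automatic, and the only pairs that can obstruct it are those of the form $\{v_1, t\}$ with $v_1 \in V$.

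Next I would analyze a pair $\{v_1, t\}$. Using $N_H(v_1) = N_G(v_1) \cup \{t\}$ and $N_H(t) = V$, the set of non-mutual neighbors of $v_1$ and $t$ comes out to $\bigl(V \setminus N_G(v_1)\bigr) \cup \{t\}$ (which contains both $v_1$ and $t$). A witness $u$ for this pair must be adjacent to $t$, which forces $u \in V$; then, since adjacency among vertices of $V$ is the same in $H$ as in $G$ and adjacency of $u$ to $t$ is free, the requirement ``$u$ adjacent to all of $\bigl(V \setminus N_G(v_1)\bigr) \cup \{t\}$'' says exactly $V \setminus N_G(v_1) \subseteq N_G(u)$. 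Note that then automatically $u \ne v_1$ and $v_1 u \in E$, because $v_1 \in V \setminus N_G(v_1) \subseteq N_G(u)$; and $u \ne t$ since $u \in V$. So the Theorem~\ref{ncondition} condition holds for $\{v_1, t\}$ if and only if there is $u \in V \setminus \{v_1\}$ with $V \setminus N(v_1) \subseteq N(u)$ — which is exactly the hypothesis of the theorem with $x = v_1$.

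Combining the two cases, the Theorem~\ref{ncondition} condition holds for $H$ at all pairs iff it holds at every pair $\{v_1, t\}$, i.e.\ iff the stated hypothesis holds for every $x \in V$; when it does, $\edim(G + K_1) = \abs{V(H)} - 1 = n$, and when it fails, Theorem~\ref{ncondition} gives $\edim(G + K_1) < n$, so Lemma~\ref{largedegree} forces $\edim(G + K_1) = n - 1$. The main thing to be careful about is the bookkeeping in the pair $\{v_1, t\}$ case: one has to check that the neighborhood conditions from Theorem~\ref{ncondition} collapse exactly to the inequality $V \setminus N(x) \subseteq N(u)$ with nothing left over, in particular that the ``another vertex'' clause $u \ne x$ is automatically satisfied rather than an extra constraint. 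I do not expect any genuine obstacle beyond this, since Theorem~\ref{ncondition} and Lemma~\ref{largedegree} supply all the structural input.
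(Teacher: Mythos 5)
Your proposal is correct and follows essentially the same route as the paper: Lemma~\ref{largedegree} pins $\edim(G+K_1)$ to $\{n-1,n\}$, and Theorem~\ref{ncondition} is checked pairwise, with pairs inside $V$ handled by $u=t$ and pairs $\{x,t\}$ reducing to the existence of $u$ with $V\setminus N(x)\subseteq N(u)$. Your only addition is the more careful bookkeeping that the non-mutual set of $\{x,t\}$ is $(V\setminus N(x))\cup\{t\}$ and that the constraints involving $t$, $x$, and $u\ne x$ are absorbed automatically, which tightens a point the paper states more loosely but does not change the argument.
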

\begin{proof}

Denote the $K_1$ graph vertex $t$. Since $t$ is connected to all the other vertices of $G + K_1$, by Lemma \ref{largedegree} $\edim(G + K_1)$ is either $n$ or $n-1$. We will use Theorem \ref{ncondition} to see when each case holds. Consider $x, y \in V$. Whatever their non-mutual connections are, $t$ is connected to all of them and to $x$ and $y$, so the hypothesis of Theorem \ref{ncondition} holds for this vertex pair. Now consider a pair $t$ and $ x \in V$. Their non-mutual neighbors are precisely $V\setminus  N(x) $. This means the condition stated in Theorem  \ref{ncondition} holds for $x, t$ if and only if there exists $u \in V$ such that $V\setminus N(x)  \subseteq N(u)$. Thus $\edim(G + K_1) = n$ if and only if this is true for any $x \in V$, which is what we were to prove. 
\end{proof}

\begin{theorem}
Let $G(V, E)$ be a graph and $P_m$ a path of length $m \geq 2$. Let $B_E \subseteq 2^V$ be the set of all the edge bases of $G$, let $B_V \subseteq 2^V$ be the set of all vertex bases of $G$. Let $k$ be the smallest possible cardinality of a union of an edge and a vertex basis, that is,  $$k = \min \big\{ \abs{S \cup T} \ \big\vert \  S\in B_V, T \in B_E \big \}.$$ Then: $$k \leq \edim(G\Box P_m) \leq k + 1.$$
\end{theorem}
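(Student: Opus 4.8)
The plan is to attack the two inequalities separately, coordinatizing $V(G \Box P_m)$ as pairs $(v, i)$ with $v \in V(G)$ and $i \in \{0, 1, \ldots, m\}$, and writing $d((v,i),(w,j)) = d_G(v,w) + |i - j|$. Under this coordinatization an edge of $G \Box P_m$ is either ``horizontal'', of the form $(u,i)(v,i)$ for $uv \in E(G)$, or ``vertical'', of the form $(v,i)(v,i+1)$.

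For the upper bound $\edim(G \Box P_m) \le k + 1$: fix $S \in B_V$ and $T \in B_E$ with $|S \cup T| = k$, and consider the candidate generating set $W = \big((S \cup T) \times \{0\}\big) \cup \{(v_0, m)\}$ for some fixed $v_0$. I would show $W$ distinguishes every pair of edges by a case analysis. The projections onto the first coordinate behave like distances in $G$ (up to the additive $|i - j|$ term), so $T \times \{0\}$ already separates two edges whose ``$G$-shadows'' differ as edges of $G$; the path-level coordinate $i$ is recovered by comparing distances to $(v_0,0)$ and $(v_0,m)$, which is where $S$ (a vertex basis, pinning down $v \in V(G)$) together with the two extreme copies lets us read off the layer index. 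The extra $+1$ vertex $(v_0,m)$ is exactly what disambiguates an edge from its mirror image across the middle of the path.

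For the lower bound $\edim(G \Box P_m) \ge k$: let $W$ be any edge basis of $G \Box P_m$, and I would project it to $G$, setting $W' = \{v : (v,i) \in W \text{ for some } i\}$. The goal is to show $W'$ contains both a vertex generating set and an edge generating set of $G$ (not necessarily disjoint), which forces $|W'| \ge k$ and hence $|W| \ge |W'| \ge k$. To see $W'$ is an edge generator of $G$: given distinct $e_1, e_2 \in E(G)$, the horizontal edges $(e_1, 0)$ and $(e_2, 0)$ in layer $0$ must be separated by some $(v,i) \in W$, and one checks that the separating vertex's first coordinate $v$ then distinguishes $e_1, e_2$ in $G$ (the $+i$ offset cancels). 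To see $W'$ is a vertex generator of $G$: given distinct $x, y \in V(G)$, apply the edge basis to the two vertical edges $(x,0)(x,1)$ and $(y,0)(y,1)$; the separating coordinate's first component distinguishes $x$ from $y$ in $G$.

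The main obstacle I anticipate is the upper-bound case analysis: there are several edge types (horizontal vs.\ vertical, same layer vs.\ different layers, same $G$-shadow vs.\ different), and for the pairs sharing a $G$-shadow one must genuinely use that $S$ is a \emph{vertex} basis rather than merely an edge basis, while for pairs sharing a path layer one must use that $T$ is an edge basis; making sure the single set $(S \cup T) \times \{0\}$ plus one mirror vertex handles \emph{all} combinations simultaneously — especially vertical edges at symmetric heights and horizontal edges whose shadows coincide but sit in different layers — is the delicate part. I would also need to double-check the degenerate small cases ($m = 2$, or $G$ with $\edim$ or $\dim$ equal to $1$) separately, and confirm that the lower-bound projection argument does not accidentally need the two bases inside $W'$ to be disjoint.
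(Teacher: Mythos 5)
Your plan is, in outline, the same as the paper's proof: the lower bound by projecting an edge basis of $G\Box P_m$ onto one copy of $G$ and testing the projection against horizontal edges of a single layer (which shows it is an edge generating set of $G$) and against the vertical edges between the first two layers (which shows it is a vertex generating set), and the upper bound by placing $S\cup T$ in one end layer together with a single extra vertex in the opposite end layer. The lower-bound half of your sketch is essentially the paper's argument and is fine as described.

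In the upper bound, however, the phrase ``$W=\big((S\cup T)\times\{0\}\big)\cup\{(v_0,m)\}$ for some fixed $v_0$'' is not correct as stated: the choice of $v_0$ is not free. Every pair of edges lying in different layers, and every horizontal--vertical pair, is separated by comparing distances to the two extreme copies of \emph{one and the same} vertex, so both $(v_0,0)$ and $(v_0,m)$ must belong to $W$; that is, you must take $v_0\in S\cup T$ (the paper takes its mirrored vertex $t$ from $M=S\cup T$). With an arbitrary $v_0$ the construction can genuinely fail: for $G=P_3$ with vertices $a,b,c$, one has $S=T=\{a\}$ and $k=1$, yet the set $\{(a,0),(c,m)\}$ does not separate the vertical edges $(a,1)(a,2)$ and $(b,0)(b,1)$, since both are at distance $1$ from $(a,0)$ and at distance $m$ from $(c,m)$. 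Your own wording (``comparing distances to $(v_0,0)$ and $(v_0,m)$'') tacitly assumes $(v_0,0)\in W$, so this is a repairable slip, but it is precisely the constraint that makes the set work. Finally, the cases you flag as delicate are the real content of the upper bound, and they are closed by short explicit computations rather than any further idea, exactly as in the paper: if two edges of the same type (both horizontal or both vertical) in layers $i\neq j$ have equal distances to $(v_0,0)$, then their distances to $(v_0,m)$ differ by $2(j-i)\neq 0$; and if a horizontal edge in layer $i$ and a vertical edge starting at layer $j$ have the same distance $d$ to $(v_0,0)$, then their distances to $(v_0,m)$ are $d+m-2i$ and $d+m-2j-1$, which have different parities. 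Adding these computations (and noting, as you do, that same-layer horizontal pairs use $T$ and same-layer vertical pairs use $S$) turns your sketch into the paper's proof.
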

\begin{proof}
Let $M = S \cup T$ with $S \in B_V, T \in B_E$ be a set for which the minimum cardinality is achieved, that is $\abs{M} = k$. \\
The graph $G\Box P_m$ can be constructed the following way:
First, take $m$ copies of $G$. Denote the $i^{\text{th}}$ copy $G(i)$. Denote the vertices of $G(i)$ with $v(i)$ for all $v \in V$. Then, connect $v(i)$ and $v(i+1)$ for all $v \in V$, $i \in \{ 1, \ldots, m-1 \}$. 
\begin{figure}[hp]
\centering

\includegraphics[scale = 0.15]{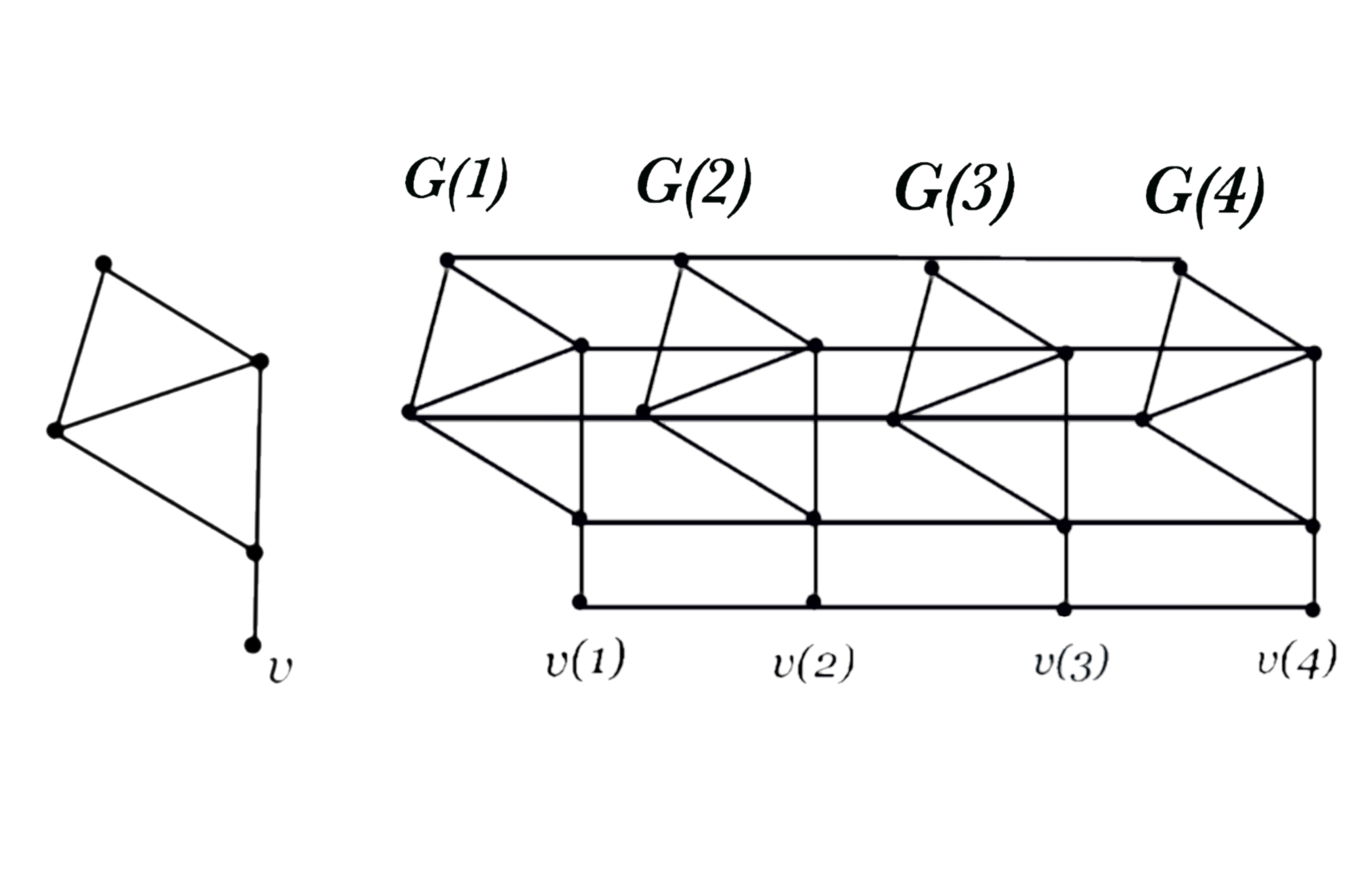}

\caption{ A graph $G$ and the described construction of the graph $G \Box P_4$.}
\end{figure}

\paragraph{Lower bound:}
Suppose $B$ is an edge basis of $G\Box P_m$. Let $B_1$ be the projection of $B$ on $G(1)$ (where we "project" $v(i)$ to $v(1)$). Consider $e \in E(G(1))$. Notice that $$d(e, v(i)) = d(e, v(1)) + i-1.$$ Thus, $e_1, e_2 \in E(G(1))$ are distinguished by $v(1)$ if and only if they are distinguished by $v(i)$. Thus, if $B$ is an edge generating set of $G\Box P_m$, then $B_1$ is an edge generating set of $G$. \\
Consider an edge $e = v(1)v(2)$. Notice that for $i \geq 2$ we have 
$$d(e, w(i)) = d(v(2), w(2)) + (i-2) = d(v(1), w(1)) + (i-2),$$ and for $i=1$, $$d(e, w(i)) = d(v(1), w(1)).$$ These differ by a constant only dependent on $i$. This means that if we consider two edges  $x = v(1)v(2)$ and  $y = u(1)u(2)$, then:
 $$w(i) \text{ distinguishes }x \text{ and } y \iff w(1) \text{ distinguishes }x \text{ and } y.$$ Moreover, $w(1)$ distinguishes $x$ and $ y$ if and only if $w(1)$ distinguishes $v(1)$ and $ u(1)$. Thus, $B_1$ is a vertex generating set of $G(1)$ as well.  
This shows that $B_1$ is both an edge generating set and a vertex generating set, so $\abs{B_1} \geq \abs{M}= k$. Also, clearly, $\abs{B_1} \leq \abs{B}$. This gives us the lower bound.

\paragraph{Upper bound:}
Let $M \subseteq V$ be a set defined in the statement of the theorem with $\abs{M}= k$, and let $t \in M$. Set $$B = \{v(1) \ \vert \ v \in M\} \cup t(m).$$ We will prove $B$ is an edge generating set of $G \Box P_m$. There are five cases of pairs of edges.
\begin{enumerate}
\item $e(i), f(i) \in E(G(i))$.\\
By definition of $M$, some $v \in M$ distinguishes $e(1), f(1)$. Since it's clear that $$d(v, z(i)) = d(v, z(1)) + i-1\text{ for any }z \in E,$$ $v$ also distinguishes $e(i)$ and $ f(i)$. 
\item $x(i)x(i+1)$ ; $y(i)y(i+1)$ for $x, y \in V$. \\
By definition of $M$, some $ v \in M$ distinguishes $x(1), y(1)$. Since 
$$d(v, z(i)) = d(v, z(1)) + i-1 \text{ for any }z \in V(G),$$ $v$ also distinguishes $x(i)$ and $ y(i)$. Thus $$d( v, x(i)x(i+1)) = d(v, x(i)) \neq d(v, y(i)) = d( v, y(i)y(i+1)).$$
\item$x(i)x(i+1)$,  $y(j)y(j+1)$ for $x, y \in V$ and $i \neq j$.\\
Notice that 
$$d(x(i)x(i+1), t(1)) = d(x(1), t(1)) + i-1$$ and 
$$d\big(x(i)x(i+1), t(m)\big) = d\big(x(m), t(m)\big) + m-i-1 =  d\big(x(1), t(1)\big) + m-i -1,$$ so
 $$d\big(x(i)x(i+1), t(1)\big) = d\big(x(i)x(i+1), t(m)\big) +m - 2i.$$ Thus, since we assumed $i \neq j$, we conclude that
if $t(1)$ doesn't distinguish $x(i)x(i+1), y(j)y(j+1)$, then $t(m)$ does.
\item $e(i), f(j)$ for $e, f \in E$, $i \neq j$. \\
Similarly to case $3$, we can see $$d\big(e(i), t(1)\big) = i-1 + d\big(e(1), t(1)\big),$$ and $$d\big(e(i), t(m)\big) = m-i + d\big(e(m), t(m)\big) = m-i + d\big(e(1), t(1)\big) = d\big(e(i)), t(1)\big) +m-2i+1.$$ Thus, if $t(1)$ does not distinguish $e(1)$ and $ f(j)$, then $t(m)$ does. 

\item$e(i)$, $y(j)y(j+1)$ for $e \in E$, $y \in V$. \\
 Suppose these two edges aren't distinguished by $t(1)$, so $$d\big(e(i), t(1)\big) = d\big(y(j)y(j+1), t(1)\big) = d.$$ As we have noted, then $$d(e(i), t(m)) = d +m - 2i +1 \text{ and } d(y(j)y(j+1), t(m)) = d +m - 2j.$$These can not be equal since they have different parity. 
\end{enumerate}
Since $\abs{B} = \abs{M} +1 = k +1$, this concludes the proof.
\end{proof}

\section{Conclusion and Open Problems}
 We have shown $\frac{\edim(G)}{\dim(G)}$ isn't bounded from above in section 3. More questions can be asked about the relationship between $\edim(G)$ and $\dim(G)$. For instance, 
\begin{itemize}
\item Are there graphs $G$ for which $\edim(G) \gg 2^{\dim(G)}$?
\item For what triples $x, y, n$ does there exist a graph $G$ with $\abs{V} = n$, $\dim(G) = x$ and $\edim(G) = y$? 
\end{itemize}
Another approach that could be taken to understand how $\dim(G)$ and $\edim(G)$ compare to each other is deriving some more properties of $\edim$ analogues to the known properties of $\dim$, as we did in the last sections 2 and 4. For example:
\begin{itemize}
\item For which graphs $G(V, E)$ is $\edim(G) = \abs{V}-2$?
\item For which graphs $G(V, E)$ is $\edim(G) = 2$?
\item For a graph $G$ and a positive integer $n$, bound $\edim(G\Box C_n)$ in terms of some function of $G$.
\item For graphs $G_1, G_2$, bound $ \edim(G_1 \Box G_2)$ in terms of some function of $G_1$ and $G_2$.
\end{itemize}

\section{Acknowledgments}
 The research was conducted during the Undergraduate Mathematics Research Program at University of Minnesota Duluth, and supported by grants NSF-1358659 and NSA H98230-16-1-0026. 
I would like to thank Joe Gallian for creating a marvelous working environment and for all his support and encouragements throughout the program. I would also like to thank Eric Riedl, Matthew Brennan and Levent Alpoge for very useful commentary and remarks.

\bibliography{Edgemetricdimension}
\bibliographystyle{plain}

\end{document}